\newcommand{\nc}{\newcommand}
\nc{\x}{\tau_{0}}
\nc{\y}{\tau_{2}}
\nc{\z}{\tau_{1}}
\nc{\Q}{{\mathcal Q}}
\newtheorem{theorem}{Theorem}
\newtheorem{proposition}{Proposition}
\newtheorem{rmk}{Remark}
\begin{document}

\title{Three-period orbits in billiards on the surfaces of constant curvature }

\author{Victoria Blumen, Ki Yeun Kim, Joe Nance, Vadim Zharnitsky }

\address{Department of Mathematics, University of Illinois, Urbana, IL 61801}

\begin{abstract}
An approach due to  Wojtkovski  \cite{wojtk}, based on the  Jacobi fields, 
is applied to 
study sets of 3-period orbits in billiards on hyperbolic plane and on 
two-dimensional sphere. It is found that the set of 3-period orbits in 
billiards on hyperbolic plane, as in the planar case, has zero measure. 
For the sphere, a new proof of Baryshnikov's theorem is obtained which states 
that 3-period orbits can form a set of positive measure if and only if a certain  
a natural condition on the orbit length is satisfied.  
\end{abstract}

\maketitle

\section{Introduction}
This article provides a unified approach, based on the Jacobi fields, 
to study open sets of 3-period orbits in billiards on manifolds with 
constant curvature. Specifically, we consider spherical and hyperbolic cases. 
While the spherical as well as Euclidean case has been treated previously,
our result for  the billiards on hyperbolic plane is a new one. Billiards on
 manifolds of constant curvature have been studied earlier in \cite{gutkin}, 
\cite{gutkin_t}.
In \cite{gutkin_t}, billiard domains on the sphere, containing open sets 
of periodic orbits, have been constructed. In \cite{ymb}
and in this article, the main goal is to assure that except for those special cases, 3-period orbits have zero measure.

The billiard system on a two dimensional Riemannian manifold $(M,g)$ consists of
 the domain ${\Q}$ with a piecewise smooth boundary $\partial {\Q}$ and
a mass point moving along the geodesics inside the domain. Whenever the mass hits the boundary,
it reflects according to  Fermat's principle so as to extremize the path length. 
That leads to the familiar law:
the angle of incidence  is equal to the angle of reflection.

Periodic orbits are a natural object of study in dynamical systems. One important question concerns
the presence of large sets, in particular sets of positive measure, of periodic orbits in 
the billiard ball problem. Informally speaking,
measure corresponds to the probability that a given orbit is periodic.
This question has been originally motivated by spectral geometry problems. 
The second term of the Weyl asymptotics for the Dirichlet problem in
 a bounded domain has a certain special form if periodic orbits of the associated 
billiard problem have zero measure \cite{ivrii}, see also a survey article 
by Gutkin \cite{gutkin_s}.
There is a natural invariant measure for the billiard map which can be defined as follows:
 let $s$ be 
an arclength parameter coordinatizing the boundary and let $\phi \in [0,\pi]$ be the angle 
of the outcoming ray from the boundary measured in the counterclockwise direction. The billiard
 ball map $T: [\partial \Q\times [0,\pi]] \rightarrow [\partial \Q\times [0,\pi]]$ which takes an
 outcoming ray to another one obtained after reflection from the boundary,
 preserves the measure $\mu = \sin \phi\, d\phi \, ds$, see {\em e.g.} \cite{birkhoff}.

Our  motivation to study the structure of the set of periodic orbits in non-Euclidean geometries
is that this understanding may help one with the planar case for higher period orbits. 
It is also expected that eigenvalue asymptotics  in non-Euclidean geometries would also 
require understanding the structure of the sets of periodic orbits.

For the planar billiard problem, it is easy to see that two period orbits have zero measure,
since these orbits must be normal to the boundary at both ends. Similarly, this is the case for
 a billiard on ${\mathbb H^2}$.
On the other hand, a billiard on $\mathbb{S}^2$ with boundary given by equator has
 two-period orbits of positive measure. This has to do with the presence of conjugated
 points on $\mathbb{S}^2$.

For the period 3, the problem on existence of positive measure sets is already non-trivial.
The first result on zero measure of 3-period orbits in planar billiards was obtained by 
Rychlik, see \cite{rychlik}, relying on symbolic calculations,
 which were later removed in \cite{stojanov}.
Using Jacobi fields, Wojtkovski gave an elegant simple proof of Rychlik's theorem. 
 Subsequently, there have been
extensions to other types of billiard systems: higher dimensional (\cite{vorobets}),
outer billiards (\cite{genin,tumanov}), and spherical (\cite{ymb}). Recently, a proof 
for the period 4 case has been announced by Glutsyuk and Kudryashov \cite{glku}.

Our main result is

\begin{theorem}\label{theo:main}
The set of 3-period orbits in any billiard on ${\mathbb H^2}$ has zero measure.
\end{theorem}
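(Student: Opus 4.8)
The plan is to carry Wojtkowski's Jacobi-field argument for planar billiards over to $\mathbb H^2$, the only structural change being that a free flight is now described by a hyperbolic rather than a unipotent transfer matrix, and then to pin down the step at which negative curvature --- unlike the spherical case --- forces a contradiction. Suppose, for contradiction, that the set $\Sigma\subset\partial\Q\times(0,\pi)$ of $3$-periodic points of $T$ has positive $\mu$-measure. A $3$-periodic orbit meets $\partial\Q$ at three points with reflection angles in $(0,\pi)$, so $\Sigma$ stays bounded away from $\phi\in\{0,\pi\}$, where $\mu=\sin\phi\,d\phi\,ds$ is comparable to Lebesgue measure; hence $\Sigma$ has positive Lebesgue measure, and after discarding a null set we may take $p\in\Sigma$ to be a density point of $\Sigma$ at which $T^3$ is differentiable. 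Since $T^3$ fixes $\Sigma$ pointwise we have $T^3(q)-T^3(p)=q-p$ for $q\in\Sigma$; at a density point every unit vector is a limit of secant directions $(q-p)/|q-p|$, $q\in\Sigma$, so dividing by $|q-p|$ and using the differentiability of $T^3$ gives $DT^3(p)v=v$ for all $v$, i.e.
\[
DT^3(p)=\mathrm{Id}.
\]
By the same reasoning applied to the $C^1$ matrix-valued map $DT^3-\mathrm{Id}$ --- which now vanishes on a positive-measure set --- we may in fact choose $p$ so that the first-order phase-space derivatives of $DT^3$ vanish at $p$ as well (and, if necessary, higher ones). These extra relations are indispensable, since $DT^3(p)=\mathrm{Id}$ by itself is consistent with $p$ being an isolated $3$-periodic point; what must be excluded is a whole family.

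Next I would express $DT^3(p)$ through the Jacobi equation along the orbit. In Jacobi coordinates $(J,\dot J)$ transverse to the trajectory, a geodesic segment of length $\ell$ in $\mathbb H^2$ contributes the transfer matrix $L_\ell=\left(\begin{smallmatrix}\cosh\ell&\sinh\ell\\ \sinh\ell&\cosh\ell\end{smallmatrix}\right)$, and a reflection off $\partial\Q$ of geodesic curvature $\kappa_i$ at angle $\phi_i$ contributes, up to sign, $R_i=\left(\begin{smallmatrix}-1&0\\ -2\kappa_i/\sin\phi_i&-1\end{smallmatrix}\right)$; both lie in $SL(2,\mathbb R)$, and
\[
DT^3(p)=R_3\,L_{\ell_3}\,R_2\,L_{\ell_2}\,R_1\,L_{\ell_1},
\]
where $\ell_1,\ell_2,\ell_3>0$ are the sides of the inscribed geodesic triangle and $\phi_1,\phi_2,\phi_3\in(0,\pi)$ its reflection angles, determined by the $\ell_i$ through the hyperbolic law of cosines. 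Imposing $DT^3(p)=\mathrm{Id}$ together with the gradient conditions and eliminating the auxiliary data, I would --- following Wojtkowski --- track the slope $z=\dot J/J$ of a Jacobi field once around the orbit, require the resulting M\"obius transformation and the matrix normalization to be trivial, and thereby collapse the system to a single scalar identity relating $\ell_1,\ell_2,\ell_3$, the curvature ratios $u_i:=2\kappa_i/\sin\phi_i$, and the first derivatives of the boundary curvature at the vertices.

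The main obstacle is to show this scalar identity admits no admissible solution, and this is exactly where the negative curvature of $\mathbb H^2$ enters. In the dispersing regime, say every $u_i\ge 0$, there is a transparent route: each $L_{\ell_i}$ is a strictly positive matrix and each $R_i$ maps the cone $\{J\dot J\ge 0\}$ of wavefronts into itself, so $DT^3(p)$ maps this cone strictly inside itself, is therefore a strict contraction, and cannot equal $\mathrm{Id}$ --- which in fact makes every periodic orbit in such a billiard hyperbolic. In general, with concave boundary arcs allowed (the regime where $DT^3=\mathrm{Id}$ really can occur for a single orbit), one instead has to check directly that the obstruction scalar is a sum of manifestly nonnegative quantities --- built from $\sinh\ell_i>0$, $\cosh\ell_i-1>0$, the $u_i$, and the hyperbolic triangle relations --- not all of which vanish. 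The spherical case is genuinely different: there $L_\ell=\left(\begin{smallmatrix}\cos\ell&\sin\ell\\ -\sin\ell&\cos\ell\end{smallmatrix}\right)$ is a rotation, its entries change sign, no cone need be preserved, and the obstruction scalar can vanish --- precisely when Baryshnikov's condition on the perimeter holds --- whereas in the flat case $L_\ell=\left(\begin{smallmatrix}1&\ell\\ 0&1\end{smallmatrix}\right)$ the same positivity recovers Rychlik's theorem. I therefore expect the genuine work to be (i) the bookkeeping in the six-fold matrix product and its first variation, and (ii) checking that the obstruction is sign-definite for every admissible $(\ell_i,\phi_i,\kappa_i,\kappa_i',\dots)$; granting this, the contradiction closes and $\Sigma$ has measure zero.
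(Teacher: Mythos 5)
Your setup matches the paper's: Jacobi fields, the hyperbolic transfer matrix $\left(\begin{smallmatrix}\cosh\ell&\sinh\ell\\ \sinh\ell&\cosh\ell\end{smallmatrix}\right)$, the reflection matrix $\left(\begin{smallmatrix}-1&0\\ 2k_g/\sin\phi&-1\end{smallmatrix}\right)$, and the condition $DT^3=\mathrm{Id}$ at density points (the paper itself only proves empty interior and cites Wojtkowski for the measure-zero upgrade, so your density-point reduction is in the right spirit). But the heart of the proof is missing, and the route you sketch for closing it would fail. Equating the top-right entries of $P(\tau_1)R(x_1)P(\tau_0)=R^{-1}(x_2)P^{-1}(\tau_2)R^{-1}(x_0)$ gives
\[
\sinh(\tau_0+\tau_1)-\sinh(\tau_2)=\frac{2k_g(x_1)\sinh(\tau_0)\sinh(\tau_1)}{\sin(\phi_1)},
\]
and combining this with the hyperbolic law of cosines for the inscribed triangle and the constancy of the perimeter $L=\tau_0+\tau_1+\tau_2$ along the family yields the clean relation $k_g(x_1)=\sin^3(\phi_1)\coth\!\left(\tfrac{L}{2}\right)$, with no derivatives of the boundary curvature appearing at all. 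Crucially, this relation is \emph{not} sign-obstructed: for a single $3$-periodic orbit it can perfectly well be satisfied by a suitable boundary, so your hope that ``the obstruction scalar is a sum of manifestly nonnegative quantities, not all of which vanish'' is a misdiagnosis, and the cone-contraction argument you offer only covers the dispersing regime, not general boundaries.

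The actual contradiction is of a different nature: the relation must hold simultaneously for all orbits in the family, in particular for the one-parameter subfamily through a fixed boundary point $s_0$ with varying reflection angle $\phi$. Along that subfamily $k_g(s_0)$ and $L$ are fixed, while $\sin^3(\phi)\coth(L/2)$ is non-constant in $\phi$ because $\coth(L/2)\neq 0$ for every $L>0$. This is exactly where the hyperbolic case differs from the spherical one: there the same computation gives $k_g=\sin^3(\phi)\cot(L/2)$, and the relation can hold on an open set precisely when $\cot(L/2)=0$ and $k_g\equiv 0$, i.e.\ $L=\pi,3\pi,5\pi$ with great-circle arcs --- Baryshnikov's condition. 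So the hyperbolic/spherical dichotomy is not ``cone-preserving vs.\ rotation'' at the level of the transfer matrices, but the nonvanishing of $\coth(L/2)$ versus the possible vanishing of $\cot(L/2)$ in the compatibility relation; without deriving that relation and running the fixed-vertex, varying-angle argument, your proposal does not reach a contradiction.
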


In order to prove this theorem, we extend the Jacobi fields approach
 from  \cite{wojtk} and present the unified proof   
which treats all three billiard systems on the constant curvature manifolds 
in the same manner.
Our argument proceeds independently of the underlying geometry until we get
the compatibility condition. Then, using the relevant cosine formula,
which depends on the geometry, we obtain the relation that must be satisfied on
a neighborhood filled with 3-period orbits
\[
k_g(s_0) =  \sin^3 (\phi_0)F(L),
\]
where $L$ is the length of 3-period orbits, $k_g(s_0)$ is geodesic curvature at 
one of the vertices, $\phi_0$ is the angle of the billiard orbit with the tangent to the boundary  
at this vertex and $s_0$ is the value of the arclength parameter $s$ at the vertex.
The function $F(L)$ depends on the underlying Riemannian manifold
\[
 F(L) =
  \begin{cases}
   \frac{2}{L} & \text{on }  {\mathbb E^2} \\
   \coth\left(\frac{L}{2}\right)    & \text{on } {\mathbb H^2}  \\
   \cot\left(\frac{L}{2}\right)    & \text{on }  {\mathbb S^2}.
  \end{cases}
\]

From this formula it is possible to classify sets of 3-period orbits. 
In particular, we obtain a new proof of a theorem by Baryshnikov on 
the spherical case \cite{ymb} where sub-Riemannian geometry methods were used.

\begin{theorem}\label{theo:sphere}
Let   $P_3$ be the set of 3-period orbits in the billiard domain $\partial \Q$ on ${\mathbb S}^2$.
Assume that some orbit $(x_0,x_1,x_2) \in P_3$ has perimeter $L=\pi,3\pi$ or
 $5\pi$ and that some arcs of $\partial \Q$ containing
$x_0,x_1,x_2$ belong to great circles. Then $(x_0,x_1,x_2) \in int(P_3)$ and $P_3$ has positive measure. Otherwise,   $(x_0,x_1,x_2) \notin int(P_3)$. 
In particular, if none of 3-period orbits are of the above special type, then $P_3$ has an empty 
interior and is the set of zero measure. 
\end{theorem}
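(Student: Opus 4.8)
The plan is to derive the stated dichotomy directly from the compatibility relation, which on $\mathbb{S}^2$ reads $k_g(s_0)=\sin^{3}(\phi_0)\cot(L/2)$ and which, by the cyclic symmetry of the derivation, holds at each vertex of each orbit of any family of 3-period orbits. I treat the two implications separately.

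\emph{Suppose $(x_0,x_1,x_2)$ is not of the special type; then it is not in $\mathrm{int}(P_3)$.} Assume the contrary, so some neighborhood $U$ in phase space $\partial\Q\times[0,\pi]$ consists entirely of 3-period orbits. First I would observe that the perimeter $L$, regarded as a function on $U$, has vanishing differential: moving any vertex along $\partial\Q$ does not change the perimeter to first order — that is exactly the reflection law at that vertex — so $L$ is locally constant on $U$. Hence $\cot(L/2)\equiv\kappa$ on $U$ and the relation becomes $k_g(s_0)=\kappa\sin^{3}(\phi_0)$ there. Since the left-hand side depends on $s_0$ alone while $\sin^{3}(\phi_0)$ is non-constant on every $\phi_0$-interval (and $\sin\phi_0\neq 0$ on a genuine orbit), fixing $s_0$ and varying $\phi_0$ forces $\kappa=0$. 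Then $\cot(L/2)=0$ gives $L\in\{\pi,3\pi,5\pi\}$ (the perimeter of a 3-period orbit on $\mathbb{S}^2$ being less than $6\pi$), while $k_g(s_0)\equiv 0$ on $U$; as the $s_0$-projection of $U$ is an interval, $k_g$ vanishes identically on a boundary arc through $x_0$, so that arc lies on a great circle. Relabelling the distinguished vertex gives the same at $x_1$ and $x_2$, so $(x_0,x_1,x_2)$ is special — a contradiction.

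\emph{Conversely, suppose $(x_0,x_1,x_2)$ is special; then $(x_0,x_1,x_2)\in\mathrm{int}(P_3)$ and $P_3$ has positive measure.} Write the boundary great circles through the vertices as $\Gamma_i=\mathbb{S}^2\cap P_i$, where $P_i$ are planes through the origin. The elementary fact I would use is that billiard reflection off $\Gamma_i$ is the restriction to $\mathbb{S}^2$ of the Euclidean reflection $R_i$ across $P_i$ (at $p\in\Gamma_i$ the map $R_i$ fixes $T_p\Gamma_i$ and reverses the normal direction within $T_p\mathbb{S}^2$). Unfolding the orbit across its three reflections develops it to a great circle $C=\mathbb{S}^2\cap Q$, and the orbit being 3-periodic is equivalent to: the isometry $g$ given by the product of $R_0,R_1,R_2$ — which depends only on the $\Gamma_i$, not on the orbit — preserves $Q$, acts as $-1$ on $Q^{\perp}$, and acts on $Q$ as the rotation by $L$. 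Since $L\equiv\pi\pmod{2\pi}$, this rotation is by $\pi$, so $g$ acts as $-I$ on $Q$ as well as on $Q^{\perp}$, whence $g=-I$ on all of $\mathbb{R}^3$ (equivalently, $P_0,P_1,P_2$ are mutually orthogonal). But $g=-I$ preserves \emph{every} great circle, acting on each by rotation by $\pi$, so every great circle $C'$ near $C$ unfolds a genuine 3-period billiard orbit, with vertices near $x_0,x_1,x_2$ on the great-circle sub-arcs of $\partial\Q$ (that the segments stay inside $\Q$ follows by continuity from the given orbit). As $C'\mapsto$ (phase point of its vertex on $\Gamma_0$) is an open map from the $2$-parameter family of great circles near $C$, these orbits fill a phase-space neighborhood of $(x_0,x_1,x_2)$; so $(x_0,x_1,x_2)\in\mathrm{int}(P_3)$ and $P_3$ has positive measure.

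For the last assertion: if no orbit is special then $\mathrm{int}(P_3)=\emptyset$ by the first implication, and that $P_3$ is moreover a null set is what the Rychlik--Wojtkowski-type mechanism behind Theorem \ref{theo:main} provides — carried out at Lebesgue density points of $P_3$, where the requisite derivatives of the billiard map still exist, it forces the compatibility relation to hold $\mu$-almost everywhere on a positive-measure $P_3$; if $\cot(L/2)\neq 0$ this would make $\sin^{3}(\phi_0)$ constant along positive-measure $\phi_0$-slices, which is impossible, so $\cot(L/2)=0$ and $k_g\equiv 0$ along the vertex arcs, i.e.\ the orbits are special after all. I expect the main obstacle to be the converse implication: pinning down that the degenerate configuration is precisely $g=-I$ and checking that the unfolded great circles near $C$ genuinely sweep out a full $2$-dimensional phase-space neighborhood of honest 3-period orbits rather than a thinner family.
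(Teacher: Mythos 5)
Your argument is correct and reaches the stated dichotomy, but only its first half follows the paper; the sufficiency half is a genuinely different route. For necessity you take the compatibility relation $k_g=\sin^3(\phi)\cot\left(\frac{L}{2}\right)$ as given, whereas the paper's Section 4 spends most of its effort deriving exactly that identity from the Jacobi-field evolution/reflection matrices and the spherical cosine rule; from there your argument (fix $s_0$, vary $\phi_0$, force $\cot(L/2)=0$, hence $L\in\{\pi,3\pi,5\pi\}$ and $k_g\equiv 0$ on arcs through all three vertices) is the paper's, with the useful addition that you justify local constancy of $L$ by the first-variation/reflection-law argument where the paper merely asserts invariance of $L$. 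For sufficiency the paper proves a separate Proposition synthetically: it shows the three great circles orthogonal to the bisectors are mutually orthogonal, builds the octant, checks that every segment of length $\pi$ issued from a point of $l_0$ closes up after two reflections, then treats $L=5\pi$ as the complement and $L=3\pi$ by an antipodal case analysis. You instead unfold through the isometry group: the product $g$ of the three mirror reflections acts on the plane $Q$ of the unfolded great circle as rotation by $L$ and, by the determinant, as $-1$ on $Q^{\perp}$, so $L\equiv\pi\pmod{2\pi}$ forces $g=-I$ (equivalently mutual orthogonality of the mirrors), and since $-I$ is precisely the time-$\pi$ phase shift along every great circle, each nearby oriented great circle folds back to a genuine 3-period orbit; this handles $\pi$, $3\pi$, $5\pi$ uniformly with no case analysis, at the price of the continuity/transversality checks you flag (crossings stay on the prescribed great-circle sub-arcs and the segments stay in $\Q$, and oriented great circles near $C$ parametrize a full phase-space neighborhood), all of which are routine. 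Both proofs delegate the upgrade from empty interior to zero measure to Wojtkowski's density-point argument, so your closing sketch is consistent with the paper's Remark.
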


\begin{rmk}
We only prove that the set of 3-period orbits on ${\mathbb H}^2$ (and on ${\mathbb S}^2$ when
the special condition is not satisfied) has an empty interior. The stronger statement about zero
 measure follows verbatim  the argument in \cite{wojtk}, page 163. 
\end{rmk}

\section{Billiard system on the surface of constant curvature}
\subsection{Jacobi fields}
Let $\Q$ be a smooth domain on a surface of constant curvature $\varkappa$. The billiard ball inside $\Q$ travels along the geodesics  and reflects at the boundary. Let  $\boldsymbol{\gamma}(\epsilon, \tau)$ be a one-parameter family of geodesics where $|\epsilon| < \epsilon_0, -\infty < \tau < \infty$. 

For the reader's convenience, we briefly recall the derivation of 
the Jacobi fields, see {\em e.g.} \cite{carmo} or \cite{bar}. 
The Jacobi field is defined by
\[
\displaystyle \mathbf{J(\tau)}=\frac{\partial \boldsymbol{\gamma}(0, \tau)}{\partial \epsilon}
\]
and it satisfies the Jacobi equation
\[\displaystyle \frac{\nabla}{d\tau}\frac{\nabla}{d\tau}\mathbf{J(\tau)}+ R(\mathbf{J(\tau)},\dot{\boldsymbol{\gamma}})\dot{\boldsymbol{\gamma}} =0,\]
where $\nabla$ denotes the covariant derivative and $R$ is the curvature tensor.
As usual, we are interested in the component of the Jacobi field, that is perpendicular to $\dot{\boldsymbol{\gamma}}$. Therefore, it can be expressed as
\[
\mathbf{J(\tau)}= J(\tau) \mathbf{n}(\tau),
\]
where $J(\tau)$ is a scalar function and $\mathbf{n}(\tau)$ is a unit vector field perpendicular to $\dot{\boldsymbol\gamma}$. If the surface has constant curvature $\varkappa$, then one obtains 
a scalar equation with constant coefficients
\begin{equation}\label{eq:jcb}
\displaystyle J''(\tau)+ \varkappa J(\tau) =0.
\end{equation}
According to the standard result in the theory of differential equations, the solution of 
the Jacobi equation is uniquely defined if two initial conditions $J(0)$ and  $J'(0)$  are given. 

\subsection{Evolution and reflection matrices}

Consider billiards on the hyperbolic plane $\mathbb{H}^2$ and the 2-sphere $\mathbb{S}^2$ which have the curvature $\varkappa =-1$ and $\varkappa = 1$ respectively.  Solving the Jacobi equation (\ref{eq:jcb}), we obtain
\[
J(\tau)=\left\{ \begin{array}{cl}
J(0) \cosh(\tau)+J'(0) \sinh(\tau)& $on $\mathbb{H}^2 \\
J(0)\cos(\tau) + J'(0)\sin(\tau)& $on $\mathbb{S}^2.\\
 \end{array} \right.
\]

In each case, we obtain the evolution matrix $P(\tau)$
\[
\left( \begin{array}{c}
J(\tau) \\
J'(\tau) \\
 \end{array} \right)
= P(\tau)
\left( \begin{array}{c}
J(0) \\
J'(0) \\
 \end{array} \right),
\]

\begin{equation*}
\textrm{where }P(\tau)=\left\{ \begin{array}{cl}
\left(\begin{array}{rc}
\cosh(\tau) & \sinh(\tau) \\ 
\sinh(\tau) & \cosh(\tau) \end{array}\right)
& \mbox{on }\mathbb{H}^2 \\ \\
\left(\begin{array}{rc}
\,\,\,\cos(\tau) & \sin(\tau)\,\,\,\, \\
\,\,\,\sin(\tau) & \cos(\tau)\,\,\,\, \end{array}\right)
& \mbox{on }\mathbb{S}^2\\
 \end{array} \right.
\end{equation*}
which describes the changes of the Jacobi field over time.

Note that the corresponding evolution matrix in the Euclidean case is given by
\begin{equation*}
P(\tau)= 
\left(\begin{array}{rc}
1 & \tau \\
0 & 1 \end{array}\right).
\end{equation*}

When the billiard ball hits the boundary at $x=(s, \phi)$, the Jacobi field is transformed by the linear map $R(x)$ which is essentially the same as the reflection map in the Euclidean case
\[
\left( \begin{array}{r}
J_{out} \\
J'_{out} \\
 \end{array} \right)
=R(x)
 \left( \begin{array}{r}
J_{in} \\
J'_{in} \\
 \end{array} \right),
\]
\begin{equation*}
\textrm{where }R(x)=\left(\begin{array}{cr}
-1 & 0 \\
\displaystyle \frac{2k_g(s)}{\sin(\phi)} & -1 \end{array}\right).
\end{equation*}
Note that the reflection matrix is directly related to the classical mirror formula, see \cite{wojtk}.

One should expect that the reflection matrix $R(x)$ for the billiard on a two dimensional
Riemannian manifold should have the same form as in the Euclidean case \cite{wojtk}.
Nevertheless, we provide some justification. 
Consider a one-parameter family of geodesics $\boldsymbol \gamma(\epsilon,\tau)$
 reflecting from the billiard boundary $\partial \Q$ on a two dimensional 
Riemannian manifold.
In an $\epsilon$-neighborhood of the reflection point $x_0$ 
of $\boldsymbol\gamma(0,\tau_0)$, the manifold
can be represented as a smooth two dimensional surface in ${\mathbb R}^3$. 
Projecting the geodesics and  the boundary onto the tangent plane at $x_0$,
 we obtain the corresponding structure on the plane: 
a family of orbits reflecting from the boundary.
It is easy to estimate that the angles as well as distances before and after the projection,
differ by $O(\epsilon^2)$. This is mainly due to the expansion 
$\cos \epsilon = 1-\epsilon^2/2+ O(\epsilon^4)$. Also, straightforward estimates show that the projected 
 boundary curve will have the curvature equal to the geodesic curvature of $\partial \Q$ with 
the accuracy $O(\epsilon^2)$. As quadratic terms do not affect linear 
transformations, the reflection map will have the same form as in the Euclidean
case with $k$ replaced by $k_g$.

\section{Billiard on the hyperbolic plane}

\begin{figure}
  \centering
   \includegraphics[trim = 0mm 0mm 200mm 0mm, width=.5\textwidth]{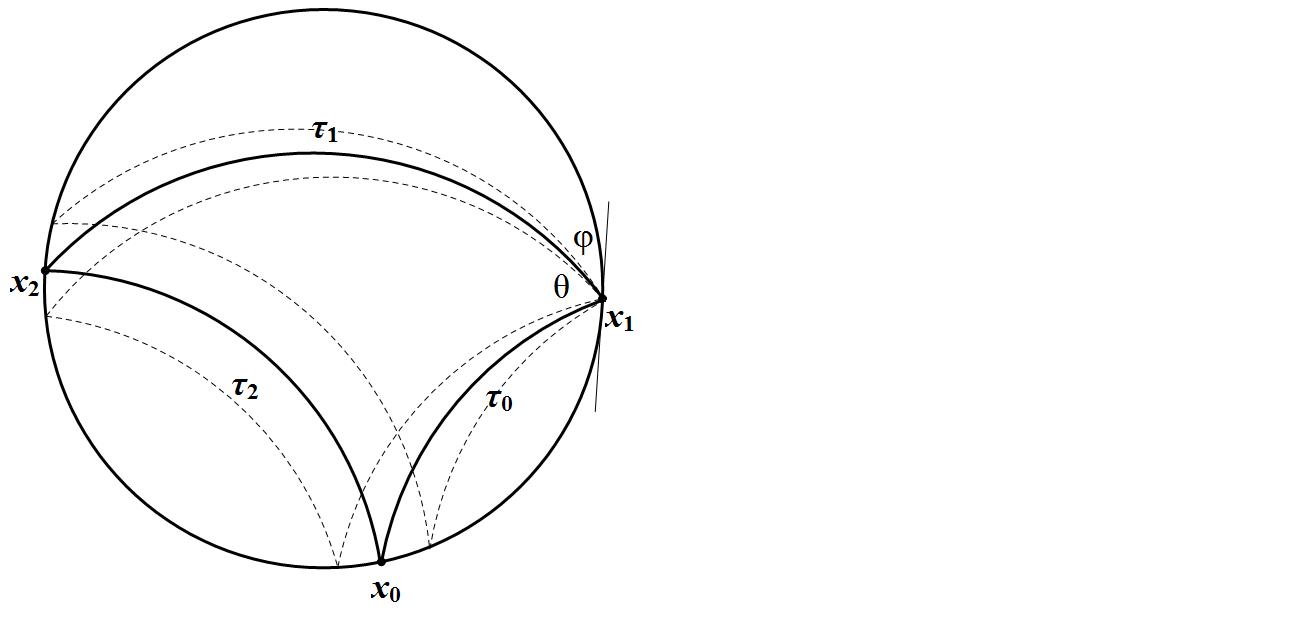}
    \caption{Periodic orbits with period 3 in a billiard on $\mathbb{H}^2$.}
\label{fig:fig1}
 \end{figure}

In this section we prove Theorem \ref{theo:main}. Assume that there is an open set of 3-period orbits. Then we must have $T^3$ and $DT^3$ equal to the identity, which implies
\begin{equation}
 P(\z)R(x_1)P(\x)R(x_0)P(\y)R(x_2) =I,
\end{equation}
where $I$ is the identity map,  $x_0, x_1$ and $x_2$ are the collision points, and $\x,\y,\z$, are 
the distances between collision points, see Figure \ref{fig:fig1}. 
This relation can be also rewritten as
\begin{equation}\label{eq:mtrx}
P(\z)R(x_1)P(\x) = R^{-1}(x_2)P^{-1}(\y)R^{-1}(x_0)
\end{equation}
which takes the form
\[
\begin{bmatrix}  \cosh(\tau_1) & \sinh(\tau_1) \\ \sinh(\tau_1) & \cosh(\tau_1) \end{bmatrix} 
\cdot
\begin{bmatrix} -1 & 0 \\ \frac{2k_g(x_1)}{\sin(\phi_1)} & -1 \end{bmatrix}
\cdot
\begin{bmatrix}  \cosh(\tau_0) & \sinh(\tau_0) \\ \sinh(\tau_0) & \cosh(\tau_0) \end{bmatrix} =
\]
\[
=
\begin{bmatrix} -1 & 0 \\ -\frac{2k_g(x_2)}{\sin(\phi_2)} & -1 \end{bmatrix}
\cdot
\begin{bmatrix}  \cosh(\tau_2) & -\sinh(\tau_2) \\ -\sinh(\tau_2) & \cosh(\tau_2) \end{bmatrix} 
\cdot 
\begin{bmatrix}  -1 & 0 \\ -\frac{2k_g(x_0)}{\sin(\phi_0)} & -1  \end{bmatrix}.
\]

\vspace{3mm}

After simplification, we equate the top right components to get\footnote{Compare with the corresponding formula in the Euclidean case:  $\x+\z-\y = \frac{2 \,k(x_1) \,\,\x \,\z}{\sin(\phi_1)}$,  which was derived in \cite{wojtk}.}, 
\begin{equation}\label{eq:step1}
  \sinh(\x+\z)-\sinh(\y)=\frac{2k_g(x_1)\sinh(\x)\sinh(\z)}{\sin(\phi_1)}.
\end{equation}
We define $\theta$ to be the interior angle between two adjacent segments of an orbit, that is, $\theta=\pi - 2\phi$, 
\mbox{see Figure \ref{fig:fig1}}. 

Then we alter the hyperbolic cosine formula into
\begin{equation*}
\cosh(\y)=\cosh(\x+\z)-\sinh(\x)\sinh(\z)-\sinh(\x)\sinh(\z)\cos(\theta_1).
\end{equation*}
We use the half angle formula to get
\begin{equation}\label{eq:step2}
\cosh(\x+\z)-\cosh(\y)=2 \cos^2\left(\frac{\theta_1}{2}\right)\sinh(\x)\sinh(\z).
\end{equation}
Combining (\ref{eq:step1}) and (\ref{eq:step2}), we arrive at
\begin{equation*}
\frac{\cosh(\x+\z)-\cosh(\y)}{\cos^2\left(\frac{\theta_1}{2}\right)}=\frac{\sinh(\x+\z)-\sinh(\y)\cos\left(\frac{\theta_1}{2}\right)}{k_g(x_1)}.
\end{equation*}

Note that the length of an orbit $L = \x + \z + \y$ is invariant. Therefore, 
\begin{equation*}\begin{split}\\
k_g(x_1)&=\frac{\sinh(\x+\z)-\sinh(\y)}{\cosh(\x+\z)-\cosh(\y)}\cos^3\left(\frac{\theta_1}{2}\right) \\
&=\frac{\sinh(L-\y)-\sinh(\y)}{\cosh(L-\y)-\cosh(\y)}\cos^3\left(\frac{\theta_1}{2}\right) \\
&=\cos^3\left(\frac{\theta_1}{2}\right)\coth\left(\frac{L}{2}\right) \\
&=\sin^3(\phi_1)\coth\left(\frac{L}{2}\right).\end{split}
\end{equation*}

This relation must hold for all nearby orbits. In particular, for all orbits starting at
the same point on the boundary with different angles of reflection. Thus, we obtain a contradiction
 because the right-hand side of the equation is not constant in any interval. 
Therefore, the set of 3-period orbits has an empty interior. Next, following an argument in
\cite{wojtk} we obtain that the set has zero measure, which ends the proof of the Theorem \ref{theo:main}.

\section{Billiard on the 2-sphere}

Now we prove Theorem \ref{theo:sphere} using the same method. Assuming there is an open set of 3-period orbits on $\mathbb{S}^2$, we again obtain that $T^3$ and $DT^3$ are equal to the identity. 
Therefore, using \eqref{eq:mtrx} again
\begin{equation*}
P(\z)R(x_1)P(\x) = R^{-1}(x_2)P^{-1}(\y)R^{-1}(x_0)
\end{equation*}
we get 
\begin{equation}\label{eq:step3}
\sin(\x)\cos(\z)+\cos(\x)\sin(\z)-\sin(\y)=\frac{2k_g(x_1)\sin(\x)\sin(\z)}{\sin(\phi_1)}.
\end{equation}
Note that this relation is the same as \eqref{eq:step1} if trigonometric functions are replaced
 with their hyperbolic counterparts.

Combining (\ref{eq:step3}) and the modified version of spherical cosine formula, we arrive at
\begin{equation}\label{eq:sphrel}
k_g(x_1)=\sin^3(\phi_1)\cot\left(\frac{L}{2}\right).
\end{equation}

\begin{figure}[htd]
    \centering

   \includegraphics[trim = 0mm 0mm 100mm 0mm, width=.7\textwidth]{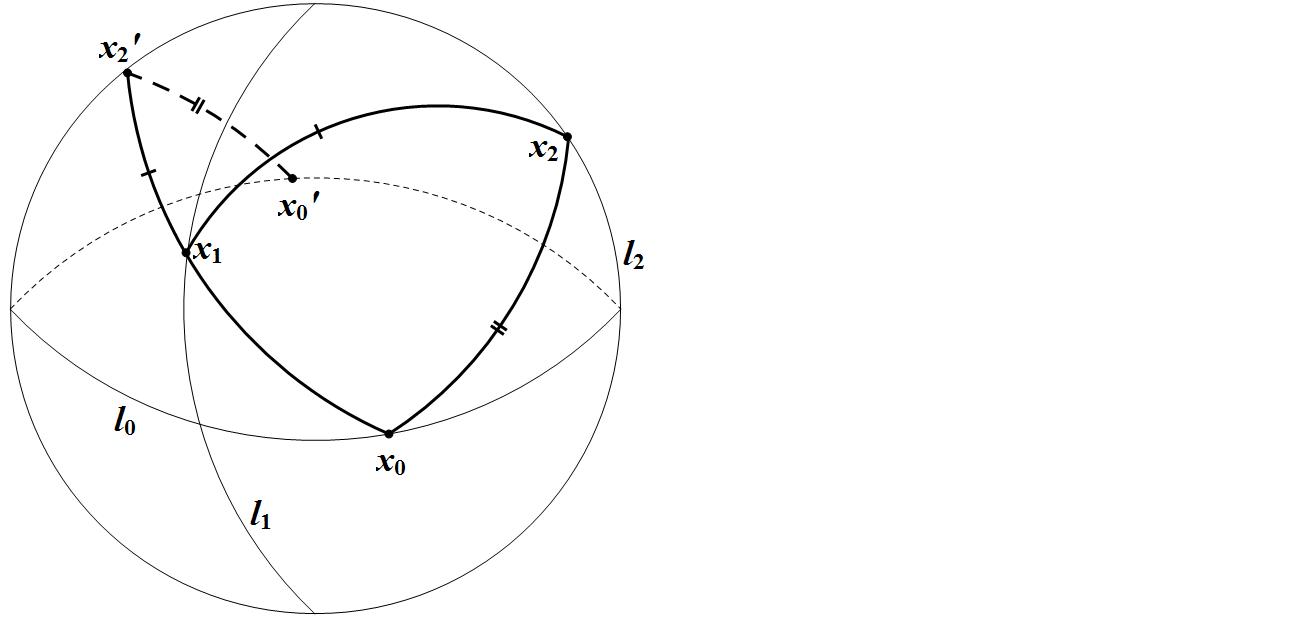}

\vspace{10mm}

    \caption{The special case $L=\pi$. The billiard boundary, which is an octant, is formed by three mutually orthogonal great circles $l_0, l_1, l_2$. The billiard orbit $(x_0,x_1,x_2)$ is obtained from 
the half of the great circle $(x_0,x_0^{\prime})$ by reflecting from the boundaries.}
 \end{figure}

If $\cot \left(\frac{L}{2}\right) \neq 0$, then we have the same contradiction as in the hyperbolic case. When $L=(2n+1)\pi$,  we have $\cot \left(\frac{L}{2}\right) =0$ and $k_g=0$. In this case,
 there could exist open sets of 3-period orbits. 

Now, we discuss
the characteristics of the billiards on which open sets of 3-period orbits exist. 
Note that only orbits without repetition are considered, and this assumption limits our
 cases to $L=\pi, 3\pi$, or $5\pi$, see also \cite{gutkin_t}.

\begin{proposition}
Consider a spherical triangle $x_0,x_1,x_2$ on the unit sphere with perimeter
 $L=\pi,3\pi$ or $5\pi$.  Let $l_0,l_1,l_2$ be the great circles passing through the vertices
 orthogonal 
to the corresponding bisectors. Then, these great circles intersect at the
right angles and any billiard boundary containing segments of $l_0,l_1,l_2$ passing through 
$x_0,x_1,x_2$ will have an open set of 3-period orbits.
 \end{proposition}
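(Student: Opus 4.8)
The plan is to run everything through the billiard unfolding, so that the right-angle claim and the existence of an open family of periodic orbits both become statements about the group generated by the reflections in $l_0,l_1,l_2$. Note first that the triangle $(x_0,x_1,x_2)$ is automatically a billiard orbit in any domain $\Q$ whose boundary follows $l_0,l_1,l_2$ through the $x_i$: because $l_i$ is orthogonal to the bisector of the triangle at $x_i$, the edges $x_{i-1}x_i$ and $x_ix_{i+1}$ make equal angles with $l_i$ on opposite sides of it, which is the law of reflection. Let $\sigma_i\in O(3)$ be the reflection in the plane of $l_i$, with unit pole $n_i$, so $\sigma_i=I-2\,n_in_i^{\top}$ and $\sigma_i(x_i)=x_i$. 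Unfolding the closed orbit across its three walls turns it into a geodesic of $\mathbb S^2$, necessarily a great circle $\Gamma$, with a monodromy $g$ --- a product of $\sigma_0,\sigma_1,\sigma_2$, each occurring once --- satisfying $g\,\Gamma(t)=\Gamma(t+L)$ for all $t$.

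\textbf{Step 1: the right angles.} Since $g$ carries the great circle $\Gamma$ to itself, it acts on $\Gamma$ either as a rotation or as a reflection, and the latter is incompatible with $g\,\Gamma(t)=\Gamma(t+L)$; hence $g$ acts on $\Gamma$, and so on its plane $\Pi$, as the rotation by the angle $L$, which for $L\in\{\pi,3\pi,5\pi\}$ equals $-\mathrm{id}_{\Pi}$. As $g$ is a product of three reflections, $\det g=-1$; since $\det(-\mathrm{id}_{\Pi})=1$ and $g$ acts by $\pm1$ on the pole axis $\Pi^{\perp}$, that action must be $-1$, so $g=-I$ on $\mathbb R^{3}$. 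It then suffices to observe the elementary fact that a product $\sigma_a\sigma_b\sigma_c=-I$ of three reflections forces the three planes, hence the three great circles, to be pairwise orthogonal: rewriting it as $\sigma_a\sigma_b=-\sigma_c$, the left-hand side is the rotation by $2\angle(l_a,l_b)$ about $n_a\times n_b$, whereas the right-hand side has eigenvalues $1,-1,-1$, i.e.\ is the rotation by $\pi$ about $n_c$; equating axes and angles gives $\angle(l_a,l_b)=\pi/2$ and $n_c\parallel n_a\times n_b$, and hence all three orthogonality relations (incidentally the $l_i$ are distinct, since a product of reflections two of which coincide is again a reflection, not $-I$). As an independent check, with $x_i$ unit vectors in $\mathbb R^{3}$, $\langle x_i,x_j\rangle=\cos d(x_i,x_j)$, and side lengths $a,b,c$, one finds $n_1=\tfrac{x_1\times x_0}{\sin c}+\tfrac{x_2\times x_1}{\sin a}$ (and similarly for $n_0$, $n_2$), and $\langle n_0,n_1\rangle\,\sin a\sin b\sin c$ reduces, by Lagrange's identity, to $\tfrac12(\sin 2a+\sin 2b+\sin 2c)-2\sin a\sin b\sin c$, which vanishes because $\sin 2a+\sin 2b+\sin 2c=4\sin a\sin b\sin c$ whenever $a+b+c$ is an odd multiple of $\pi$ --- exactly the arithmetic isolating the admissible perimeters ($\cos L=-1$, $\sin L=0$).

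\textbf{Step 2: the open family.} Since $l_0,l_1,l_2$ are now pairwise orthogonal, the $\sigma_i$ commute and $\sigma_0\sigma_1\sigma_2=-I$. Let $\Gamma'$ be a great circle close to the unfolded orbit $\Gamma$. Because $\Gamma$ crosses the three unfolded walls transversally at points interior to the boundary arcs of $\Q$, so does $\Gamma'$, and folding those three reflections back produces a billiard path $x_0'\to x_1'\to x_2'$ in $\Q$ with $x_i'\in l_i$ near $x_i$. This path closes and is a genuine $3$-orbit: as $x_0'$ lies on $l_0$ it lies in the $(-1)$-eigenspace of $\sigma_1\sigma_2$, while $\sigma_1\sigma_2$ preserves the great circle $l_0$ setwise; so, of the two points where $\Gamma'$ meets $l_0$, the end of the third unfolded segment must be $\sigma_1\sigma_2 x_0'=-x_0'$, whence the folded trajectory returns exactly to $x_0'$ and its total length stays $\equiv\pi\pmod{2\pi}$, so equals $L$. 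Finally, the law of reflection at $x_0'$ needed to close the loop amounts to $\sigma_0\sigma_1\sigma_2=-I$, which holds. The great circles near $\Gamma$ form an open two-parameter family, the associated $3$-orbits fill an open subset of $P_3$, and the conclusion about positive measure follows as in \cite{wojtk}.

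The part I expect to be the real work is making Step 2 airtight. One must exclude the degenerate case in which some $x_i$ sits at a vertex of the tiling generated by $l_0,l_1,l_2$, where the billiard map is undefined and the combinatorial type of a nearby trajectory is unstable; and, because for $L=3\pi$ or $5\pi$ some edges of the orbit wind more than a half-turn around $\mathbb S^2$, one must check that a nearby trajectory still makes exactly the three intended reflections near $x_0,x_1,x_2$ and no others --- which is where the hypothesis ``any billiard boundary containing segments of $l_0,l_1,l_2$'' has to be understood as ``whose remaining boundary does not meet the orbit or its small perturbations''. Step 1 is then mostly bookkeeping once the unfolding is set up, the one substantive input being the classical triangle identity above, which is exactly why the exceptional perimeters are $\pi,3\pi$ and $5\pi$.
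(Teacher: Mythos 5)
Your proof is correct (modulo the caveats you yourself flag, which the paper also leaves implicit), but it runs along a genuinely different line than the paper's. You work in $O(3)$: unfolding the orbit gives a monodromy $g=\sigma_1\sigma_2\sigma_0$ with $g\Gamma(t)=\Gamma(t+L)$, and since $L$ is an odd multiple of $\pi$ and $\det g=-1$ you get $g=-I$, which forces the three reflection planes to be pairwise orthogonal; conversely, $\sigma_0\sigma_1\sigma_2=-I$ makes every nearby great circle fold back into a closed $3$-orbit (the antipodality of the two intersection points with $l_0$, plus $\dot\Gamma'(t+\pi)=-\dot\Gamma'(t)$, closes both position and direction), giving the open two-parameter family. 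This treats $L=\pi,3\pi,5\pi$ uniformly and, if anything, establishes the orthogonality assertion more rigorously than the paper does. The paper instead argues synthetically and case by case: for $L=\pi$ it fixes $x_0\in l_0$, takes $l_1,l_2$ perpendicular to $l_0$ and to each other, and shows that any geodesic segment of length $\pi$ emanating from $x_0$ folds, via the two reflections, into a closed triangle inside the resulting octant, so all octant orbits are $3$-periodic and any length-$\pi$ orbit sits in such an octant; the $5\pi$ case is read off as the complement of the $\pi$-triangle on the three great circles (total length $6\pi$), and the $3\pi$ case is reduced to the $\pi$ case by replacing one vertex with its antipode. What each buys: your monodromy argument is shorter, uniform in $L$, and makes the closure of perturbed orbits (including the reflection law at the return point) automatic from $g=-I$; the paper's construction is more elementary and visual, exhibits the octant billiard explicitly, and doubles as the classification of where the vertices of such orbits must lie, which is how the authors use it in Theorem \ref{theo:sphere}. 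Your closing remarks about corner-hitting trajectories and about the rest of the boundary not interfering with the perturbed orbits are fair; the paper handles the former only with the parenthetical ``except those which hit the corners'' and the latter not at all, so your proof is at no disadvantage there.
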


\begin{proof}
Let $l_0$ be a geodesic on $\mathbb{S}^2$ and $x_0$ be any point on $l_0$. Create two geodesics, 
$l_1$ and $l_2$, that are perpendicular to $l_0$ and to each other, but do not pass
 through $x_0$. Consider any geodesic segment, $\tau$, of length $\pi$, whose endpoint
 is $x_0$. Denote the angle between $\tau$ and $l_0$ as $\sigma$.  Through two reflections
 over $l_1$ and $l_2$, this line $\tau$ forms a triangle of length $\pi$ within the boundary
 created by $l_0$, $l_1$, and $l_2$. Since $x_0$ and $\sigma$ were arbitrary, any 3-period
 orbit of length $\pi$ must be contained in one octant, which is formed by $l_0$, $l_1$,
 and $l_2$, whose intersections are orthogonal. In particular, this implies that all
 orbits in the octant are 3-periodic except those which hit the corners. \\
Consider three great circles that intersect at $x_0$, $x_1$, and $x_2$. The total length of
 the lines is $6\pi$. This implies that an orbit of length $5\pi$ is the complement
 of $\triangle x_0x_1x_2$. It follows that an orbit of length $5\pi$ must have vertices
 on $l_0$, $l_1$, $l_2$ as in the $\pi$ case. \\
Now we consider a 3-period orbit of length $3\pi$. Since it is impossible to create
 an orbit where $\tau_0=\tau_1=\tau_2=\pi$, we look at the two other possible cases;
 when $0<\tau_0<\pi$, $\pi<\tau_1, \tau_2 <2\pi$, and $0<\tau_0, \tau_1<\pi$,
 $\pi<\tau_2 <2\pi$. In case 1, as shown in Figure \ref{fig:fig3}, we know that
 $\triangle x_0x_1x'_2$ has perimeter $\pi$ and that $x_2$ is antipodal to $x'_2$.
 This implies that $x_2$ lies on $l_2$. Note that case 2 is simply the complement of
 case 1. Therefore, we conclude that a 3-period orbit of length $3\pi$ also
 has vertices on $l_0$, $l_1$, and $l_2$. 
\end{proof}

\begin{figure}
\centering
    \includegraphics[width=1\textwidth]{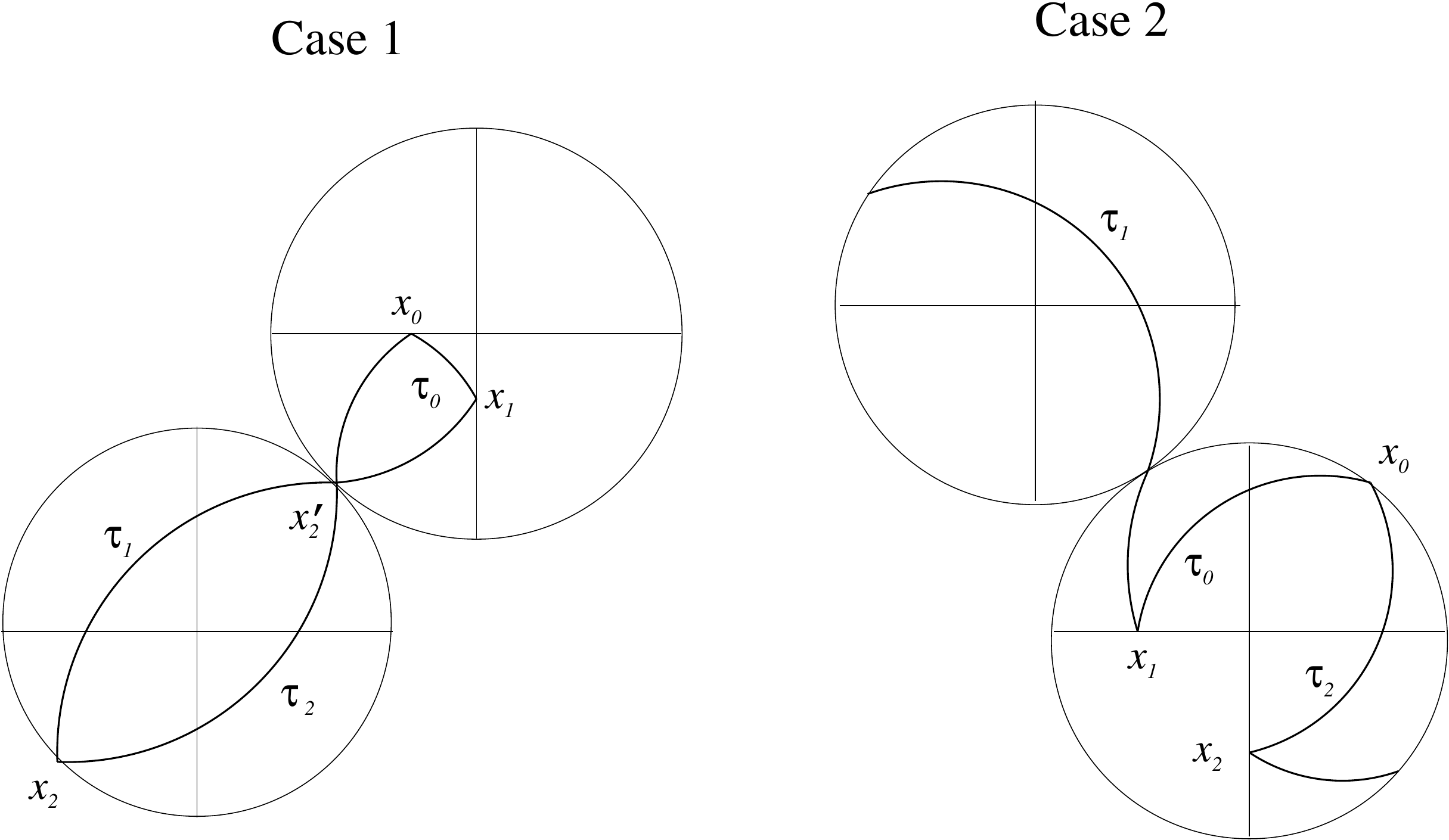}

\vspace{5mm}

\caption{The special cases $L=3\pi$ and $L=5\pi$.}
\label{fig:fig3}
 \end{figure}

The last proposition completely classifies the special cases when open sets of 3-period
orbits occur. If a given 3-period orbit has perimeter $L\neq \pi, 3\pi, 5\pi$ then the relation 
\eqref{eq:sphrel} implies that this orbit has an empty interior in $P_3$. If 
$L= \pi, 3\pi$ or $5\pi$ but for some vertex $x_i$ the geodesic curvature $k_g(s)$ 
does not vanish identically on any open boundary arc containing  $x_i$, 
then  \eqref{eq:sphrel} again leads to  the same contradiction. 

Finally applying the argument in \cite{wojtk}, we obtain that if the special cases do not occur 
the set of 3-period orbits has zero measure. This ends the proof of Theorem \ref{theo:sphere}.

\section{Acknowledgements}

We acknowledge support from National Science Foundation grant 
DMS 08-38434 ”EMSW21-MCTP: Research Experience for Graduate Students. 

We would also like to thank Y.M. Baryshnikov for providing us with a copy of his 
unpublished manuscript and for many useful suggestions. We also thank 
 the other participants of Applied Dynamics and Applied Analysis REGs at UIUC
for useful comments. Our special thanks go to N. Klamsakul for collaborating
 with us at the early stage of this project.

\end{document}